\let\ge\geqslant
\def\1{^{-1}}
\def\CP{{\mathbf C\mathbf P}}
\newtheorem{theorem}{Theorem}[section]
\newtheorem{lemma}[theorem]{Lemma}
\newtheorem{proposition}[theorem]{Proposition}
\theoremstyle{definition}
\theoremstyle{remark}
\numberwithin{equation}{section}
\begin{document}

\title[]{Computing The Krichever genus}

\begin{abstract} Let $\psi$ denote the genus
that corresponds to the formal group law having
invariant differential $\omega(t)$ equal to
$\sqrt{1+p_1t+p_2t^2+p_3t^3+p_4t^4}$
and let
$\kappa$ classify the formal group law strictly isomorphic to the universal formal group law under strict isomorphism $x\CP(x)$.
We prove that on the rational complex bordism ring the Krichever-H\"ohn genus $\phi_{KH}$ is the composition $\psi\circ \kappa^{-1}$.
We construct certain elements $A_{ij}$ in the Lazard ring and give an alternative definition of the universal Krichever formal group law. We conclude that
the coefficient ring of the universal Krichever formal group law is the quotient of the Lazard ring by the ideal generated by all $A_{ij}$, $i,j\geq3$.
\end{abstract}

\author{Malkhaz Bakuradze }
\address{Faculty of exact and natural sciences, Iv. Javakhishvili Tbilisi State University, Georgia }
\email{malkhaz.bakuradze@tsu.ge}
\thanks{The author was supported by Volkswagen Foundation, Ref.: I/84 328 and by Rustaveli NSF, DI/16/5-103/12 }

\maketitle

\section{Rational Krichever-H\"ohn genus}

For the current state of complex cobordism and formal group laws we refer the reader to the excellent survey \cite{BUCH2}.

A formal group law over a commutative ring with unit $R$ is a power series $F(x,y)\in R[[x,y]]$ satisfying

\medskip

(i) $F(x,0)=F(0,x)=x$,

(ii) $F(x,y)=F(y,x)$,

(iii) $F(x,F(y,z))=F(F(x,y),z)$.

\medskip

Let $F$ and $G$ be formal group laws. A homomorphism from $F$ to $G$ is a power series $\nu(x)\in R[[x]]$ with constant term $0$ such that
$$\nu(F(x,y))=G(\nu(x),\nu(y)).$$

It is an isomorphism if $\nu'(0)$ (the coefficient at $x$) is a unit in $R$, and a strict isomorphism if the coefficient at $x$ is 1.

If $F$ is a formal group law over a commutative $\mathbb{Q}$-algebra $R$, then it is strictly isomorphic to the additive formal group law $x+y$. In other words, there is a strict isomorphism $l(x)$ from $F$ to the additive formal group law, called the logarithm of $F$, so that $F(x,y)=l^{-1}(l(x)+l(y))$.  The inverse to logarithm is called the exponential of $F$.

The logarithm $l(x)\in R \otimes \mathbb{Q}[[x]]$ of a formal group law $F$ is given by
$$
l(x)=\int_{0}^{x}\frac{dt}{\omega(t)},\,\,\,\,\,\omega(x)=\frac{\partial F(x,y)}{\partial y}(x,0).
$$

There is a ring $L$, called the universal Lazard ring, and a universal formal group law $F(x,y)=\sum a_{ij}x^iy^j$ defined over $L$. This means that for any formal group law $G$ over any commutative ring with unit $R$ there is a unique ring homomorphism $r:L\rightarrow R$ such that $G(x,y)=\sum r(a_{ij})x^iy^j$.

The formal group law of geometric cobordism was introduced in \cite{NOV}. Following Quillen we will identify it with the universal Lazard formal group law as it is proved in \cite{Q} that the coefficient ring of complex cobordism $MU_*=\mathbb{Z}[x_1,x_2,...]$, $|x_i|=2i$ is naturally isomorphic as a graded ring to the universal Lazard ring.

\bigskip

The Krichever-H\"{o}hn genus or the general four variable complex elliptic genus \cite{hoehndiss},\cite{KR},
$$\phi_{KH}: MU_{*}\otimes \mathbb{Q}\to \mathbb{Q}[q_1,...,q_4]$$
is a graded $\mathbb{Q}$-algebra homomorphism defined by the following property: if one denotes by $f_{Kr}(x)$ the exponential of the Krichever \cite{BUCH-BU} universal formal group law $\mathcal{F}_{Kr}$, then the series
$$
h(x):=\frac{f_{Kr}'(x)}{f_{Kr}(x)}
$$
satisfies the differential equation
\begin{equation}
\label{eq:condition-h}
(h'(x))^2=S(h(x)),
\end{equation}
where $S(x)=x^4+q_1x^3+q_2x^2+q_3x+q_4$, the generic monic polynomial of degree 4 with formal parameters
$q_i$ of weights $|q_i|=2i$.

\medskip

This genus is the universal genus on the rational bordism ring of $SU$-manifolds which is multiplicative in fiber bundles of
$SU$-manifolds with compact connected structure group.

\bigskip

To generalize the Ochanine  elliptic genus from $\Omega^{SO}_*\otimes \mathbb{Q}$ to $\mathbb{Q}[\mu,\epsilon]$ (see \cite{OSH}),
 a new elliptic genus $\psi$ is defined in \cite{SCH},
 $$\psi: MU_{*}\otimes \mathbb{Q}\to \mathbb{Q}[p_1,p_2,p_3,p_4],$$
 to be the genus whose logarithm equals

$$
\int_{0}^{x}\frac{dt}{\omega(t)} ,\,\,\,\,\omega(t)=\sqrt{1+p_1t+p_2t^2+p_3t^3+p_4t^4}
$$
and $p_i$ are again formal parameters $|p_i|=2i$.

\medskip

It is proved in \cite{SCH} that the $\psi$-genus is the universal genus on the rational complex bordism ring
which is multiplicative in projectivizations $P(E)$ of complex vector bundles $E \to B$ over
Calabi-Yau 3-folds $B$ (i.e. $B$ is a compact K\"{a}hler manifold with vanishing first Chern class).

Clearly to calculate the values of $\psi$ on $\CP_i$, the generators of the rational complex bordism ring $MU_{*}\otimes\mathbb{Q}=\mathbb{Q}[\CP_1,\CP_2,\dots ]$ we need only the Taylor expansion of $(1+y)^{-1/2}$ as by above definition
$$(1+p_1x+p_2x^2+p_3x^3+p_4x^4)^{-1/2}=log'_{\psi}=\sum_{i\geq 1}\psi(\CP_i)x^i.$$

A straightforward calculation shows that $\psi$ is surjective \cite{SCH}:

\medskip

$\psi(\CP_1)=-\frac{1}{2}p_1$;

$\psi(\CP_2)=\frac{3}{8}p_1^2-\frac{1}{2}p_2$;

$\psi(\CP_3)=-\frac{5}{16}p_1^3+\frac{3}{4}p_1p_2-\frac{1}{2}p_3$;

$\psi(\CP_4)=\frac{35}{128}p_1^4-\frac{15}{16}p_1^2 p_2+\frac{3}{8}p_2^2+\frac{3}{4}p_1p_3-\frac{1}{2}p_4$.

\medskip

It is natural to ask whether we can similarly calculate $\phi_{KH}$
in an elementary manner,
different from that relying on the formulas in \cite{BUCH-PANOV-RAY} and \cite{hoehndiss}.

\medskip

Let $\kappa$ be the classifying map of the formal group law $\tilde{F}$ over the rational Lazard ring defined as follows.

Let $\CP(x)=1+\sum_{i\geq 1}\CP_i x^i$, where $\CP_i$ is the bordism class of the complex projective space $CP^i$ and let
$$\nu(x):=x\CP(x)$$
be the strict isomorphism $F\to \tilde{F}$, where $F$ is the universal formal group law, so that
$$\nu(F(x,y))=\tilde{F}(\nu(x),\nu(y)).$$
Now let
$$\kappa:MU_{*} \otimes \mathbb{Q}\to MU_{*}\otimes \mathbb{Q}$$
be the classifying map of $\tilde{F}$ and let $log(x)$ and $mog(x)$ be the logarithm series of $F$ and $\tilde{F}$ respectively then by definition
$$
mog(x)=log(\nu^{-1}(x)), \,\,\,\,\,\, \kappa(log(x))=mog(x).
$$

Therefore the value $\kappa(\CP_i)$ is determined by equating the coefficients at $x^i$ in

\begin{equation}
\label{eq:kappa}
\sum_{i\geq 1}\frac{\kappa(\CP_i)}{i+1}x^{i+1}=\sum_{i\geq 1}\frac{\CP_i}{i+1}(\nu^{-1}(x))^{i+1}.
\end{equation}

For instance

\medskip

$\kappa(\CP_1)=-\CP_1$;

$\kappa(\CP_2)=3\CP_1^2-2\CP_2$;

$\kappa(\CP_3)=-10\CP_1^3+12\CP_1\CP_2-3\CP_3$;

$\kappa(\CP_4)=35\CP_1^4-60\CP_1^2\CP_2+20\CP_1\CP_3+10\CP_2^2-4\CP_4$.

\bigskip

\bigskip

The following theorem shows how $\phi_{KH}$ is related to $\psi$.

\begin{theorem}
\label{formula} Let $F$ be the universal formal group law and $\tilde{F}$
its strictly isomorphic formal group law under strict isomorphism $\nu(x)=x\CP(x)$.
Let $t:\mathbb{Q}[p_1,...,p_4]\to \mathbb{Q}[q_1,...,q_4]$ be the ring isomorphism defined by $t(p_i)=q_i$ and
$\kappa:MU_{*} \otimes \mathbb{Q}\to MU_{*}\otimes \mathbb{Q}$ the classifying map of $\tilde{F}$. Then

\medskip

i) the pair $ (t,\,\,\sum_{i\geq 0}\phi_{KH}(\CP_i)x^{i+1})$
is the strict isomorphism from  $\phi_{KH}(F)$ to $\psi (F)$, i.e.,
the series $\sum_{i\geq 0}\phi_{KH}(\CP_i)x^{i+1}$ is the strict isomorphism  from $\phi_{KH}(F)$ to $t\psi(F)$ in the usual sense.

\medskip

ii) A method to compute $\phi_{KH}$ is given by the formula
$\phi_{KH}=t\circ\psi\circ \kappa^{-1}$.
\end{theorem}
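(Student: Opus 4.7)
The plan is to reduce both parts to the single identity $\phi_{KH}(mog)=t\psi(log)$ on logarithm series. Applying the ring homomorphism $\phi_{KH}$ coefficient-wise to the strict-isomorphism equation $\nu(F(x,y))=\tilde F(\nu(x),\nu(y))$ shows automatically that the series
$$\tilde\nu(x):=\phi_{KH}(\nu)(x)=\sum_{i\ge 0}\phi_{KH}(\CP_i)x^{i+1}$$
is a strict isomorphism from $\phi_{KH}(F)=\mathcal{F}_{Kr}$ to $\phi_{KH}(\tilde F)$. Once $\phi_{KH}(\tilde F)=t\psi(F)$ is known, part (i) follows immediately, and part (ii) is obtained by comparing coefficients in the resulting identity on logarithms. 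Hence everything reduces to proving $\phi_{KH}(mog)=t\psi(log)$.

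Set $L(y):=\phi_{KH}(mog)(y)$. Because the exponential of $\tilde F$ is $\nu\circ(\text{exp of }F)$, the exponential of $\phi_{KH}(\tilde F)$ is $\tilde\nu\circ f_{Kr}$, and therefore $L(y)=f_{Kr}^{-1}(\tilde\nu^{-1}(y))$. The crucial observation is that differentiating $\phi_{KH}(log)(z)=f_{Kr}^{-1}(z)$ gives $(f_{Kr}^{-1})'(z)=\sum_{i\ge 0}\phi_{KH}(\CP_i)z^i$, so
$$\tilde\nu(z)=z\cdot(f_{Kr}^{-1})'(z)=\frac{z}{f_{Kr}'(f_{Kr}^{-1}(z))}.$$
Writing $w:=f_{Kr}^{-1}(z)=L(y)$ and $y:=\tilde\nu(z)$, we obtain $y=f_{Kr}(w)/f_{Kr}'(w)=1/h(w)$, i.e.\ $h(L(y))=1/y$. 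Differentiating this relation once and squaring, then invoking the defining equation $(h'(w))^2=S(h(w))$ with $S(x)=x^4+q_1x^3+q_2x^2+q_3x+q_4$, gives
$$(L'(y))^{-2}=y^4\,S(1/y)=1+q_1y+q_2y^2+q_3y^3+q_4y^4,$$
which is precisely $(t\psi(log)'(y))^{-2}$ by the definition of $\psi$. Since both $L$ and $t\psi(log)$ begin with $y$, they coincide as power series.

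With $\phi_{KH}(mog)=t\psi(log)$ in hand, part (i) is as explained. For part (ii), comparing the coefficients of $x^{n+1}/(n+1)$ yields $\phi_{KH}(\kappa(\CP_n))=t(\psi(\CP_n))$ for every $n\ge 1$, i.e.\ $\phi_{KH}\circ\kappa=t\circ\psi$ on $MU_*\otimes\Q$. The displayed computations exhibit $\kappa(\CP_n)=-n\CP_n+(\text{decomposables in }\CP_1,\ldots,\CP_{n-1})$, so the graded endomorphism $\kappa$ of $\Q[\CP_1,\CP_2,\ldots]$ is triangular with nonzero diagonal entries, hence an automorphism. Inverting yields $\phi_{KH}=t\circ\psi\circ\kappa^{-1}$. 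The principal obstacle is the middle paragraph: one must spot the substitution $h(L(y))=1/y$ and then verify that $y^4S(1/y)$ is exactly the quartic $1+q_1y+\cdots+q_4y^4$ appearing under the square root in the $\psi$-logarithm.
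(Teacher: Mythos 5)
Your proposal is correct and follows essentially the paper's route: your identity $h(L(y))=1/y$ and the resulting computation $(L'(y))^{-2}=y^4S(1/y)=1+q_1y+\dots+q_4y^4$ are precisely the content of Lemmas \ref{lem1} and \ref{KH=F} pushed through $\phi_{KH}$, and the remaining steps (applying $\phi_{KH}$ to $\nu(F(x,y))=\tilde F(\nu(x),\nu(y))$, then composing with $\kappa^{-1}$) coincide with the paper's proof of the theorem. The only minor deviations are that you identify $\phi_{KH}(\tilde F)$ with $t\psi(F)$ by matching logarithms directly (taking the unique square root with constant term $1$ in $\Q[q_1,\dots,q_4][[y]]$) where the paper invokes the universality of $\psi(F)$ among formal group laws whose invariant form squared is a quartic, and that you justify invertibility of $\kappa$ by the triangularity $\kappa(\CP_n)\equiv -n\CP_n$ modulo decomposables, which should be proved for all $n$ (an easy computation with $mog(x)=log(\nu^{-1}(x))$) rather than read off from the displayed cases $n\leq 4$, whereas the paper simply asserts that $\kappa$ is an isomorphism.
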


\medskip

To establish the theorem, we need the following two lemmas.

\begin{lemma}
\label{lem1}
Let $exp$ be the exponent of $F$. The series $\frac1{h(x)}=exp(x)/exp'(x)$ is invertible.
Furthermore, the inverse
 $j(x)$ of this series coincides with then $mog(x)$, the logarithmic series of $\tilde{F}$.
\end{lemma}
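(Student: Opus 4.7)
The strategy is to verify invertibility by inspection of leading terms, and then to identify $mog$ with $(\exp/\exp')^{-1}$ by a short chain-rule computation whose only non-formal input is the classical Mischenko expression for the logarithm of the universal formal group law in terms of the bordism classes of complex projective spaces.

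For invertibility, since $\exp$ is the compositional inverse of $\log$ we have $\exp(x)=x+O(x^2)$ and hence $\exp'(x)=1+O(x)$, so
$$\frac{1}{h(x)}=\frac{\exp(x)}{\exp'(x)}=x+O(x^2)$$
is a power series with zero constant term and unit linear coefficient. Such a series is compositionally invertible in $MU_*\otimes\Q[[x]]$, and this gives the series $j(x)$.

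For the identification $j=mog$, recall the Mischenko formula $\log'(x)=\sum_{i\ge 0}\CP_i x^i=\CP(x)$, so that the strict isomorphism $\nu$ from the statement rewrites as $\nu(x)=x\log'(x)$. By the definition of $\tilde F$ one has $mog(x)=\log(\nu^{-1}(x))$. Setting $u=mog(x)$, so that $\exp(u)=\nu^{-1}(x)$, I would compute
$$x=\nu(\exp(u))=\exp(u)\cdot\log'(\exp(u)).$$
Differentiating the identity $\log(\exp(u))=u$ yields $\log'(\exp(u))\,\exp'(u)=1$, hence $\log'(\exp(u))=1/\exp'(u)$. Substituting back gives $x=\exp(u)/\exp'(u)=1/h(u)$, which is exactly the statement that $u=mog(x)$ is the compositional inverse of $\exp(x)/\exp'(x)$, i.e. $mog=j$.

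The argument is formally short and there is no serious obstacle once the Mischenko formula $\log'(x)=\CP(x)$ is in hand; everything else is the chain rule for formal power series. The only bookkeeping point is to work over $MU_*\otimes\Q$ throughout so that $\log$, $\exp$, and the compositional inverses used above are all legitimate elements of the relevant power series ring.
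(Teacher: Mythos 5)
Your argument is correct and is essentially the paper's own proof: both reduce the claim to the identity $\nu(\exp(u))=\exp(u)\log'(\exp(u))=\exp(u)/\exp'(u)$, using the Mischenko formula $\log'=\CP$ together with the chain rule, and then read off $mog=j$ from $mog=\log\circ\nu^{-1}$. Your explicit check that $\exp(x)/\exp'(x)=x+O(x^2)$ is compositionally invertible is a standard point the paper leaves implicit.
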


\begin{proof} Note that
$$
j^{-1}(log(x))=\frac{exp(log(x))}{exp{'}(log(x))}=x\CP(x).
$$
Hence by definition of $\nu(x)$ one has

$$
j^{-1}(mog(x))=j^{-1}(log(\nu^{-1}(x)))=\nu^{-1}(x)\CP(\nu^{-1}(x))=x,
$$
as $x\CP(x)=\nu(x)$.
\end{proof}

\medskip

\begin{lemma}
\label{KH=F} Let $\tilde{\omega}=\frac1{mog'(x)}$ be the invariant differential form of the formal group law $\tilde{F}$ above.
Then the condition of Krichever-H\"ohn \eqref{eq:condition-h} is satisfied if and only if $\tilde{\omega}(x)^2$ is a polynomial of degree 4.
\end{lemma}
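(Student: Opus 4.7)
The plan is to translate the differential equation \eqref{eq:condition-h} for $h$ into an algebraic condition on $\tilde{\omega}$ by a single change of variables. The key tool is Lemma \ref{lem1}: since $j(x)=mog(x)$ is the formal inverse of $1/h(x)$, the substitution $u=1/h(x)$ is the same thing as $x=mog(u)$. Differentiating gives $dx/du=mog'(u)=1/\tilde{\omega}(u)$, so $du/dx=\tilde{\omega}(u)$.

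With this substitution $h(x)=1/u$, so by the chain rule
\[
h'(x)=\frac{dh}{du}\cdot\frac{du}{dx}=-\frac{\tilde{\omega}(u)}{u^{2}},
\]
and squaring yields $(h'(x))^{2}=\tilde{\omega}(u)^{2}/u^{4}$. On the other side, $S(h(x))=S(1/u)=u^{-4}\bigl(1+q_{1}u+q_{2}u^{2}+q_{3}u^{3}+q_{4}u^{4}\bigr)$. Clearing $u^{-4}$, equation \eqref{eq:condition-h} becomes
\[
\tilde{\omega}(u)^{2}=1+q_{1}u+q_{2}u^{2}+q_{3}u^{3}+q_{4}u^{4}.
\]

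Since $mog'(0)=1$ forces $\tilde{\omega}(0)=1$, the constant term on the right is automatic, so the displayed identity holds for some choice of $q_{1},\ldots,q_{4}$ if and only if $\tilde{\omega}(u)^{2}$ is a polynomial of degree at most $4$, in which case $q_{1},\ldots,q_{4}$ are read off as its non-constant coefficients. This gives the equivalence claimed in the lemma.

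I do not anticipate a real obstacle here: the only delicate step is correctly extracting from Lemma \ref{lem1} the change-of-variables rule $du/dx=\tilde{\omega}(u)$, but that is an immediate consequence of the identity $j=mog$. Everything else is a one-line substitution in \eqref{eq:condition-h} followed by inspection.
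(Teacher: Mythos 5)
Your proof is correct and follows essentially the same route as the paper: substituting $x=j(u)=mog(u)$ (equivalently, composing the equation with $j$), using $h(j(u))=1/u$ and $j'(u)=1/\tilde{\omega}(u)$ from Lemma \ref{lem1} to get $h'(j(u))=-\tilde{\omega}(u)/u^{2}$, and reducing \eqref{eq:condition-h} to $\tilde{\omega}(u)^{2}=u^{4}S(1/u)=1+q_{1}u+\cdots+q_{4}u^{4}$. Your extra remark that $\tilde{\omega}(0)=1$ makes the constant term automatic is a harmless refinement of the paper's observation that $u^{4}S(1/u)$ is a degree-four polynomial.
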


\begin{proof}
Since the series $j(x)$ is invertible, the condition \eqref{eq:condition-h} is equivalent to
$$
(h'(j(x)))^2=S(h(j(x))).
$$
But $h(j(x))=1/x$, hence
$$
(h\circ j)'(x)=h'(j(x))j'(x)=-1/x^2,
$$
so that by Lemma \ref{lem1}
$$
h'(j(x))=-\tilde{\omega}(x)/x^2.
$$
It follows that the condition \eqref{eq:condition-h} is equivalent to
$$
\tilde{\omega}(x)^2=x^4S(1/x),
$$
where on the right hand side one clearly has a degree four polynomial.
\end{proof}

 Now let us prove Theorem \ref{formula}. Lemma \ref{KH=F} implies that $\phi_{KH}(\tilde{F}))$ is of type $\psi(F)$, that is  corresponding invariant form $\omega(x)^2$ is a polynomial of degree 4. By definition the formal group law $\psi(F)$ is universal with this property. Therefore there is classifying map of $\phi_{KH}(\tilde{F})$, that is unique ring homomorphism
 $t:\mathbb{Q}[p_1,...,p_4]\to \mathbb{Q}[q_1,...,q_4]$ such that $t(\psi(F))=\phi_{KH}(\tilde{F})=\phi_{KH} (\kappa (F))$. Therefore $t\circ \psi=\phi_{KH}\circ \kappa $. This proves i). For ii) note that by definition $\kappa$ is isomorphism and we get
 $t\circ \psi \circ \kappa ^{-1}=\phi_{KH}$. Finally $t(p_i)=q_i$ as $t$ is unique and it sends ${1+p_1t+p_2t^2+p_3t^3+p_4t^4}$ to ${1+q_1t+q_2t^2+q_3t^3+p_4t^4}$.
 \qed

\bigskip

\section{Integral Krichever genus}

Now we turn to the universal Krichever formal group law $F_{Kr}$ \cite{KR} and prove that it coincides with the universal formal group law by Buchstaber $F_{B}$ (with a minor specialisation that does not affect the formal group law).

In \cite{BUCH1} V. M. Buchstaber has given the analytical solution of a functional equation for the exponent of the formal group law of the form

\begin{equation}
\label{eq:FB1}
\mathcal{F}_{B}(x,y)=\sum \alpha_{ij}x^iy^j=\frac{A(y)x^2-A(x)y^2}{B(y)x-B(x)y}.
\end{equation}

\medskip

Note that if our series $A$ and $B$ have the form
$$A(t)=A_0+A_1t+A_2t^2+O(t^3),$$
$$B(t)=B_0+B_1t+B_2t^2+O(t^3),$$
then the coefficient $B_1=B'(0)$ does not affect the formal group law.

\bigskip

\begin{lemma}
\label{eq:B(t)}
Let $\omega(x)=\frac{\partial F(x,y)}{\partial y}(x,0)$ be the invariant form of the universal formal group law $F$, and
let $\mathcal{F}$ be the formal group law of the form \eqref{eq:FB1}, with $B'(0)=A'(0)$. Then the invariant form of $\mathcal{F}$ equals $B(x)$, i.e., $B(x)$ is the image of $\omega(x)$ under the ring homomorphism classifying the formal group law $\mathcal{F}$.
\end{lemma}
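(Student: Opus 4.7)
The plan is to compute the invariant differential $\tilde{\omega}(x) := \partial_y \mathcal{F}(x,y)\big|_{y=0}$ directly from the explicit formula \eqref{eq:FB1} by applying the quotient rule, and to check that the hypothesis $B'(0)=A'(0)$ is exactly what kills the one obstruction to equality. The second assertion (that $B(x)$ is the image of $\omega(x)$ under the classifying map) will then drop out for formal reasons, since taking $\partial_y$ at $y=0$ commutes with applying any ring homomorphism coefficient-wise.

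Write $\mathcal{F}=N/D$ with $N(x,y)=A(y)x^2-A(x)y^2$ and $D(x,y)=B(y)x-B(x)y$. First I would note that for $\mathcal{F}$ to be a formal group law at all, $\mathcal{F}(x,0)=x$ forces $A_0=B_0$; following the Buchstaber normalization one takes $A_0=B_0=1$, which is also what makes $\tilde\omega(0)=1$. Then the quotient rule gives
$$
\tilde{\omega}(x)=\frac{\partial_y N(x,0)\cdot D(x,0)-N(x,0)\cdot \partial_y D(x,0)}{D(x,0)^2}.
$$
The four ingredients are immediate: $\partial_y N(x,0)=A'(0)x^2$, $D(x,0)=x$, $N(x,0)=x^2$, and $\partial_y D(x,0)=B'(0)x-B(x)$. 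Substituting and simplifying yields
$$
\tilde{\omega}(x)=\frac{A'(0)x^3-x^2\bigl(B'(0)x-B(x)\bigr)}{x^2}=\bigl(A'(0)-B'(0)\bigr)x+B(x).
$$
The hypothesis $A'(0)=B'(0)$ cancels the linear term, leaving exactly $\tilde{\omega}(x)=B(x)$.

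For the final sentence of the lemma, let $r:L\to R$ be the ring homomorphism classifying $\mathcal{F}$, so that $r$ applied coefficient-wise to $F(x,y)=\sum a_{ij}x^iy^j$ produces $\mathcal{F}(x,y)$. Since $\omega(x)=\partial_y F(x,0)$ is built from the coefficients $a_{i,1}$, applying $r$ coefficient-wise to $\omega$ gives $\partial_y\mathcal{F}(x,0)=\tilde\omega(x)=B(x)$, as required.

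I do not expect a real obstacle: the whole argument is a one-line application of the quotient rule. The only subtle point is being explicit about the normalization $A_0=B_0=1$ (which is part of the Buchstaber setup in \cite{BUCH1} and is what makes $\mathcal{F}$ a strict formal group law), and observing that the cancellation $A'(0)=B'(0)$ is precisely what the lemma's hypothesis is tailored to achieve — explaining, in retrospect, why $B_1$ is irrelevant to $\mathcal{F}$ as remarked just before the lemma.
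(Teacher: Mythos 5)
Your proof is correct and follows essentially the same route as the paper: both determine the invariant form of $\mathcal{F}$ directly from \eqref{eq:FB1}, using the forced normalization $A_0=B_0=1$ and the hypothesis $A'(0)=B'(0)$, and then note that the classifying homomorphism carries $\omega(x)$ to this invariant form. The only cosmetic difference is that you get $\partial_y\mathcal{F}(x,0)=\bigl(A'(0)-B'(0)\bigr)x+B(x)$ by the quotient rule applied to the closed form, whereas the paper expands $\mathcal{F}$ modulo $O(x^2y^2)$ and reads off the coefficients $\alpha_{11}=A_1$ and $\alpha_{1i}=B_i$ for $i\geq 2$.
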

\begin{proof}
To see this note that

$$
\mathcal{F}(x,y)=\frac{A_0}{B_0}(x+y)+O(xy).
$$
So if $\mathcal{F}(x,y)$ is a formal group law we must have $A_0=B_0$, and after dividing the numerator and denominator appropriately we may assume that $A_0=B_0=1$. We then furthermore calculate
$$
\mathcal{F}(x,y)=x+y+A_1xy+\sum_{i=2}^\infty B_i(x^iy+xy^i)+O(x^2y^2).
$$

We thus have
$$\alpha_{1i}=B_i, \,i\geq 2, \,\, \alpha_{11}=A_1=A'(0)=B'(0)$$
and
$$
1+\sum_{i\ge1}{\alpha}_{1i}t^i
$$
is restricted $(1+\sum_{i\ge1}[\CP^i]t^i)^{-1}=\omega(t)$.
\end{proof}

Let us now present a minor modification of the analysis of $\mathcal{F}_{B}$ performed in \cite{roindiss} and as Lemma \ref{eq:B(t)} suggests to introduce
\begin{equation}
\label{eq:Aij}
A(x,y)=\sum A_{ij}x^iy^j=F(x,y)(x\omega(y)-y\omega(x)).
\end{equation}

 We define the universal Nadiradze formal group law $\mathcal{F}_{N}$ by the obvious classifying map of the Lazard ring to its  quotient ring by the ideal generated by all $A_{ij}$ with $i,j\geq 3$.

\bigskip

\begin{proposition}
\label{eq:A(x,y)}
Let $L$ denote the Lazard ring.

\medskip

i) In $L[[x]]$, the identity
$\omega'(x) - \omega'(0) = 2 x \hat{\omega}(x)$ holds, where $\hat{\omega}(x) = \sum_{i \geqslant 1} \omega_i x^{i-1}$.

ii) The formal series $A(x,y)$ satisfies  the identity
\begin{equation*}
A(x,y)=\left(x\omega(y)+y\omega(x)-\omega'(0)xy\right)\left(x\omega(y)-y\omega(x)\right)+\left(\omega(x)\hat{\omega}(x)-
\omega(y)\hat{\omega}(y)\right)x^2y^2
\end{equation*}
in $L[[x,y]]/(xy)^3$.
\end{proposition}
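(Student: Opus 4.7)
Part i) amounts to the observation that $\omega'(x) - \omega'(0)$ has no constant term in $L[[x]]$, hence is divisible by $2x$ in $L[[x]]$; the explicit formula for $\hat{\omega}(x)$ then follows by reading off coefficients from $\omega(x) = 1 + \omega_1 x + \omega_2 x^2 + \cdots$.

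For part ii), my strategy is to reduce the identity modulo $(xy)^{3}$ to a single identity modulo $y^{3}$. Both sides are antisymmetric under $x \leftrightarrow y$ (on the left because $F$ is symmetric while $x\omega(y) - y\omega(x)$ is antisymmetric; on the right termwise), and in $L[[x,y]]$ one has $(x^{3}) \cap (y^{3}) = ((xy)^{3})$, so the natural map $L[[x,y]]/((xy)^{3}) \hookrightarrow L[[x,y]]/(x^{3}) \times L[[x,y]]/(y^{3})$ is injective. Hence it suffices to verify the identity modulo $y^{3}$, the case modulo $x^{3}$ following by symmetry.

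To check it modulo $y^{3}$, I would first determine $F(x,y) \pmod{y^{3}}$ from the invariance identity $\omega(y) F_{y}(x,y) = \omega(F(x,y))$, which holds for any formal group law since $F(x,y) = \ell^{-1}(\ell(x)+\ell(y))$ with $\ell'(t) = 1/\omega(t)$. Differentiating in $y$, evaluating at $y=0$, and invoking part i) yields $F_{yy}(x,0) = \omega(x)(\omega'(x) - \omega'(0)) = 2 x \omega(x) \hat{\omega}(x)$, so $F(x,y) \equiv x + \omega(x) y + x \omega(x) \hat{\omega}(x) y^{2} \pmod{y^{3}}$. Multiplying this truncation by $x\omega(y) - y\omega(x) \pmod{y^{3}}$ and expanding the claimed right-hand side modulo $y^{3}$ (where the term $\omega(y)\hat{\omega}(y) x^{2}y^{2}$ collapses to $\hat{\omega}(0) x^{2} y^{2}$), one compares coefficients of $y^{0}, y^{1}, y^{2}$ and finds agreement after elementary cancellation. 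The only genuine input is the invariance identity above; the remainder is bookkeeping, and this combinatorial bookkeeping — keeping track of the various $\omega$-coefficients and signs — is the main but purely routine obstacle.
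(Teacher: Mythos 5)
There is a genuine gap, and it is in part i). Over the Lazard ring $L$, which is a polynomial ring over $\mathbb{Z}$, the element $2$ is not invertible, so divisibility by $2x$ is strictly stronger than divisibility by $x$: the absence of a constant term in $\omega'(x)-\omega'(0)$ gives only the factor $x$, and your ``hence is divisible by $2x$'' is a non sequitur. The factor $2$ is precisely the content of i): writing $\omega(x)=1+b_1x+b_2x^2+\cdots$, it asserts that each $(i+1)b_{i+1}$ is even in $L$ (this is what later makes $\beta(x)=\frac{b'(x)-b'(0)}{2x}$ integral in the Krichever law). Relatedly, your phrase ``reading off coefficients from $\omega(x)=1+\omega_1x+\omega_2x^2+\cdots$'' suggests you take the $\omega_i$ to be the coefficients of $\omega$ itself; with that reading i) would say $(i+1)\omega_{i+1}=2\omega_i$, which is false. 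The $\omega_i$ are new elements of $L$ whose existence is the claim. The correct argument --- the paper's --- is one you essentially already have in hand for ii): since $F(x,y)=\sum a_{ij}x^iy^j$ with $a_{02}=0$, the series $\frac{\partial^2F}{\partial y^2}(x,0)=2\sum_{i\ge 1}a_{i2}x^i$ visibly lies in $2x\,L[[x]]$; combining this with $\frac{\partial^2F}{\partial y^2}(x,0)=\omega(x)\bigl(\omega'(x)-\omega'(0)\bigr)$ (your invariance-identity computation, the same as the paper's exp/log computation \eqref{eq:f''gx}) and the invertibility of $\omega(x)$ gives $\omega'(x)-\omega'(0)\in 2x\,L[[x]]$, i.e.\ i), together with $x\omega(x)\hat{\omega}(x)=\sum_{i\ge1}a_{i2}x^i$.

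For part ii) your organization is sound and close in substance to the paper's: the reduction of the congruence mod $(xy)^3$ to one mod $y^3$, via antisymmetry of both sides and $(x^3)\cap(y^3)=((xy)^3)$ in $L[[x,y]]$, is valid, and the needed input $F(x,y)\equiv x+\omega(x)y+x\omega(x)\hat{\omega}(x)y^2 \pmod{y^3}$ is exactly what the paper extracts by applying $\partial^2/\partial y^2$ at $y=0$ (the paper phrases this as computing $\sum A_{i2}x^i$ and reassembling by antisymmetry). The mod-$y^3$ comparison does close up, but note that the cancellation uses $\omega_1=b_2$ and, more fundamentally, that $\hat{\omega}$ is defined over $L$ at all --- both are consequences of i). So the unjustified $2$-divisibility propagates into ii) as you have written it; once your proof of i) is replaced by the argument above, your plan for ii) goes through.
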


\begin{proof}
Let $f$ and $g$ be the exponent and logarithm of $F$, respectively.
Hence $F(x,y)=f(g(x)+g(y))$ and
\begin{equation}
\label{eq:f'}
f'(x)=1/g'(f(x))=\omega(f(x)), \,\,\,f'(g(x))=\omega(f(g(x))=\omega(x).
\end{equation}
Let $\omega(x)=1+b_1x+b_2x^2+\cdots $.
Since $g''(0)=-f''(0)=-\omega'(0)=-b_1$, we conclude
\begin{equation}
\label{eq:f''gx}
\frac{\partial^2 F }{\partial y^2}(x,0)=f^{''}(g(x))+f'(g(x))g''(0)=\omega'(x)\omega(x)-\omega'(0)\omega(x).
\end{equation}
This implies i), since the left hand side of \eqref{eq:f''gx} has factor 2, and $\omega(x)$ is invertible. ii) Because of antisymmetry we have modulo $(xy)^3$
$$
A(x,y)=A(y)x^2-A(x)y^2=\sum (A_{i2}x^2y^i-A_{i2}x^iy^2).
$$
We want to calculate $-\sum A_{i2}x^i$ in terms of $\omega(x)$.

\medskip

Applying $\frac{\partial^2}{\partial y^2}(x,0)$ to \eqref{eq:Aij} and taking into account \eqref{eq:f'} and \eqref{eq:f''gx} we obtain
$$
-2\sum A_{i2}x^i=x\omega(x)\omega'(x)-x\omega'(0)\omega(x)+2x\omega'(0)\omega(x)-2\omega^2(x)+2b_2x^2.
$$
Since the coefficients are in the Lazard ring, this reasoning implies
$$
-\sum A_{i2}x^i=x\omega(x)\frac{\omega'(x)-\omega'(0)}{2}+x\omega'(0)\omega(x)-\omega^2(x)+b_2x^2.
$$
Consequently
\begin{align*}
\sum (A_{i2}x^2y^i-A_{i2}x^iy^2)&=
(x\omega(y)+y\omega(x))(x\omega(y)-y\omega(x))-\omega'(0)xy(x\omega(y)-y\omega(x))\\
&\quad
+\omega(x)\hat{\omega}(x)x^2y^2-\omega(y)\hat{\omega}(y)x^2y^2.
\end{align*}
\end{proof}

In order to compute the Krichever genus on the coefficients of the formal group law of geometric cobordism, in \cite{BUCH-BU}, the universal Krichever formal group law $\mathcal{F}_{Kr}$ is defined as

\begin{equation}
\label{eq:F_N}
\mathcal{F}_{Kr}(x,y)=xb(y)+yb(x)-b'(0)xy+
\frac{b(x)\beta(x)-b(y)\beta(y)}{xb(y)-yb(x)}x^2y^2,
\end{equation}
where $\beta(x)=\frac{b'(x)-b'(0)}{2x}$.
In \cite{BUCH-BU}, it is moreover proved that
$$b(x)=\frac{\partial \mathcal{F}_{Kr}}{\partial y}(x,0).$$
Lemma \ref{eq:B(t)} and Proposition \ref{eq:A(x,y)} ii) imply that
$\mathcal{F}_{Kr}$ can alternatively be defined by the classifying map of
$\mathcal{F}_N$ if $B(x)$ is written as $b(x)$. Thus $\mathcal{F}_{N}=\mathcal{F}_{Kr}$.

Following \cite{BUCH1},  the authors of \cite{BUCH-PANOV-RAY} consider the following formal group law corresponding to the Krichever genus
$$
\mathcal{F}_{b}(u_1,u_2)=u_1c(u_2)+u_2c(u_1)-au_1u_2-\frac{d(u_1)-d(u_2)}{u_1c(u_2)-u_2c(u_1)}u_1^2u_2^2.
$$
It follows from Lemma \ref{eq:B(t)} and Proposition \ref{eq:A(x,y)} ii)
that, if we take $c'(0)=a$,
the Krichever genus
$\mathcal{F}_{b}(x,y)$ coincides with \eqref{eq:F_N}, that is $c(x)=b(x)$ and $d(x)=-b(x)\beta(x)$.

\bigskip

Thus we get the following

\begin{theorem} Let $F$ be the universal formal group law, let
$\omega(x)=\frac{\partial F(x,y)}{\partial y}(x,0)$ be its invariant form, and let
$$
\sum A_{ij}x^iy^j=F(x,y)(x\omega(y)-y\omega(x)).
$$
The Buchstaber, Krichever and Nadiradze  formal group laws coincide, that is,
$$
\mathcal{F}_{b}=\mathcal{F}_{Kr}=\mathcal{F}_{N},
$$
and the coefficient ring is the quotient of the Lazard ring  by the ideal generated by all $A_{ij}$ with $i,j\geq 3$.
\end{theorem}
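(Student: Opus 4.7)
The plan is to reduce the whole theorem to the modulo-$(xy)^3$ computation of Proposition~\ref{eq:A(x,y)}~ii) by pushing it through the quotient map $L\to L/I$, where $I=(A_{ij}:i,j\geq 3)$. First I would observe the key numerical coincidence: killing the $A_{ij}$ with $i,j\geq 3$ in $L$ amounts to killing every monomial of $A(x,y)=\sum A_{ij}x^iy^j$ that lies in $(xy)^3$. Consequently, the congruence of Proposition~\ref{eq:A(x,y)}~ii) becomes an honest equality in $L/I[[x,y]]$. The factor $x\omega(y)-y\omega(x)$ begins with $x-y$ and so is not a zero divisor there; dividing through, and using part~i) of the Proposition to identify $\hat\omega(x)$ with $(\omega'(x)-\omega'(0))/(2x)$, one exhibits the image of $F$ in $L/I$, which is $\mathcal{F}_N$ by definition, as exactly the series~\eqref{eq:F_N} with $b=\omega$ and $\beta=\hat\omega$. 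This gives $\mathcal{F}_N=\mathcal{F}_{Kr}$.

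The equality $\mathcal{F}_b=\mathcal{F}_{Kr}$ is then a direct comparison of defining formulas under the substitutions already flagged just before the theorem: $a=c'(0)$, $c=b$, and $d=-b\beta$ term-by-term matches $\mathcal{F}_b$ to the right-hand side of~\eqref{eq:F_N}. For the coefficient ring statement, one direction is built into the definition of $\mathcal{F}_N$. For the reverse direction I would use the explicit form~\eqref{eq:F_N} to note that every nonzero mixed coefficient of $\mathcal{F}_{Kr}$ has $\min(i,j)\leq 2$; computing $F(x,y)(x\omega(y)-y\omega(x))$ directly in the Krichever coefficient ring then shows that the images of $A_{ij}$ for $i,j\geq 3$ vanish there, so the classifying map from $L$ to the Krichever coefficient ring factors through $L/I$, producing a map that is inverse to the surjection constructed above.

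The main delicacy to check carefully is the match between the truncation modulo $(xy)^3$ in Proposition~\ref{eq:A(x,y)}~ii) and the ideal $I$: the Proposition was set up precisely so that any discrepancy lives in bidegrees $(i,j)$ with $i,j\geq 3$, which is exactly what the generators of $I$ kill. Once this observation is made, the division by $x\omega(y)-y\omega(x)$ inside $L/I[[x,y]]$ is formal, and the remaining identifications $\mathcal{F}_N=\mathcal{F}_{Kr}=\mathcal{F}_b$ together with the coefficient ring statement reduce to routine bookkeeping.
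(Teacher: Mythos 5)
Your proposal is correct and follows essentially the same route as the paper's own (very terse) justification: make the congruence of Proposition~\ref{eq:A(x,y)}~ii) exact over the quotient $L/I$ (the discrepancy lies in bidegrees with $i,j\ge 3$), divide by $x\omega(y)-y\omega(x)$, identify the result with the form \eqref{eq:F_N} using Lemma~\ref{eq:B(t)} and Proposition~\ref{eq:A(x,y)}~i), and then run universality in both directions, with $\mathcal{F}_b=\mathcal{F}_{Kr}$ a direct substitution. One phrasing slip worth fixing: it is not true that every mixed coefficient of $\mathcal{F}_{Kr}$ itself has $\min(i,j)\le 2$; the operative fact, which your next clause actually uses, is that the product $\mathcal{F}_{Kr}(x,y)\bigl(xb(y)-yb(x)\bigr)$ has no monomials $x^iy^j$ with $i,j\ge 3$, as its factored form shows, so the classifying map of any Krichever-type law kills the $A_{ij}$ with $i,j\ge 3$.
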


\bigskip

In \cite{B-J}, we calculated the coefficient ring of the Nadiradze formal group law $\mathcal{F}_{N}$ up to dimension 26. Namely, there is a set
of polynomial generators $z_1,z_2,...$ of the Lazard ring for which the low degree defining relations are
$$
5z_5=z_2z_3+2z_1z_4,\,\,2z_6=0,\,\,z_1z_6=0,\,\,z_3z_6=0,\,\,z_{10}=0,\,\,z_5z_6=0,\,\,z_{12}=0,
$$
and $7z_7,$ $2z_8,$ $3z_9,$ $11z_{11},$ and $13z_{13}$ are decomposable.
For reasons of space, we omit here these long decompositions.
We note that our calculations agree with the results in \cite{BUCH-BU} on the structure of the coefficient ring of $\mathcal{F}_{Kr}$ obtained in terms of
the associativity equation. The new information here concerning the Krichever group and hence the Krichever genus is that,
in dimensions 20 and 24, there are no indecomposable elements,
because in these dimensions
$z_{10}=0$ and $z_{12}=0$.

The question arises in which dimensions any element is multiplicatively decomposable.

\end{document}